\documentclass[11pt]{article}

\usepackage{amsfonts,latexsym,amsmath,amssymb,amsthm}
\usepackage{fullpage}
\usepackage{hyperref}

\newtheorem{theorem}{Theorem}[section]
\newtheorem{proposition}[theorem]{Proposition}
\newtheorem{lemma}[theorem]{Lemma}

\numberwithin{equation}{section}

\newcommand{\R}{\mathbb R}
\newcommand{\Pp}{\mathbb P}
\newcommand{\conv}{\operatorname{conv}}
\newcommand{\vol}{\operatorname{vol}}
\newcommand{\supp}{\operatorname{supp}}
\newcommand{\rank}{\operatorname{rank}}
\newcommand{\BM}{d_{\mathrm{BM}}}
\newcommand{\eps}{\varepsilon}
\newcommand{\ls}{\leqslant}
\newcommand{\gr}{\geqslant}

\usepackage{setspace}
\begin{document}
\small

\title{\bf Gaussian polytopes with large Banach--Mazur distance to the cross-polytope}
\author{Antonios Hmadi}
\date{}
\maketitle

\begin{abstract}
\footnotesize
Let $B_1^n$ be the standard cross-polytope in $\R^n$, let $g_1,\ldots,g_m$ be independent standard Gaussian vectors in $\R^n$, and set $G_m=\conv\{\pm g_1,\ldots,\pm g_m\}$.
For $m=n^3$ it is proved that
$$ \Pp\left\{\BM(G_m,B_1^n)\gr c n^{5/8}(\ln n)^{-5/8}\right\}\gr 1-\frac2n $$
for a suitable absolute constant $c>0$.
This independently improves the polynomial exponent $4/7$ in Friedland's preceding work.
Independent concurrent work of Friedland which appeared after completion of the present manuscript obtains the same polynomial exponent with the stronger logarithmic factor $(\ln n)^{-1/4}$ by a different argument.
The proof uses Friedland's discretization and conditioning argument together with the $K/U$ decomposition.
A selected family of $K$ vectors is suppressed and the remaining $K$ vectors are quotiented out.
In the resulting quotient simultaneous bounds are proved for every top dimensional exterior product formed from the suppressed $K$ vectors and the $U$ vectors.
A Dvoretzky--Rogers selection after L\"owner normalization converts these determinant estimates into a bound for the minimum volume ellipsoid of the whole projected polytope and Maurey's empirical method then gives the required Gaussian measure estimate.
\end{abstract}

\section{Introduction}

The diameter of the Banach--Mazur compactum of $n$ dimensional normed spaces is of order $n$ by Gluskin's theorem~\cite{Gluskin81}.
A related more rigid problem posed by Pe\l czy\'nski, is to fix one of the two spaces and ask how far an arbitrary $n$ dimensional space can be from it.
For the cube or equivalently by polarity for the cross-polytope this asks for the asymptotic order of the radius of the Banach--Mazur compactum with the cube as fixed center; see~\cite{Pelczynski84,Szarek91}.

For $d\in\mathbb N$ and $1\ls p\ls\infty$, let $B_p^d$ denote the unit ball of $\ell_p^d$; thus, if $e_1,\ldots,e_n$ is the standard basis of $\R^n$, then $B_1^n=\conv\{\pm e_1,\ldots,\pm e_n\}$ is the cross-polytope and $B_\infty^n=[-1,1]^n$ is the cube.
Write $GL_n$ for the group of invertible linear operators on $\R^n$.
For centrally symmetric convex bodies $K,L\subset\R^n$ their Banach--Mazur distance is
$$ \BM(K,L)=\inf\{a\gr1:T(K)\subseteq L\subseteq aT(K)\ \hbox{for some }T\in GL_n\}. $$
The same notation is used for normed spaces and their unit balls.
Put $R_\infty(n):=\sup_K\BM(K,B_\infty^n)$, where the supremum is over all centrally symmetric convex bodies in $\R^n$.
By polarity, $R_\infty(n)=\sup_K\BM(K,B_1^n)$.
For general background on Banach--Mazur distance, polarity and the methods of asymptotic geometric analysis used below, see~\cite{AAGM15,AAGM21}.

The best general upper exponent is $5/6$, obtained by Giannopoulos~\cite{Giannopoulos95}.
Youssef later recovered this exponent, with an improved constant, through restricted invertibility and proportional Dvoretzky--Rogers factorization~\cite[Theorems~3.6 and~3.7]{Youssef14}.
The connection between this problem and Dvoretzky--Rogers factorization goes back to Bourgain and Szarek~\cite{BourgainSzarek88}.
In the proportional form, Szarek and Talagrand obtained a dependence of order $\varepsilon^{-2}$~\cite{SzarekTalagrand89}; Giannopoulos improved it first to $\varepsilon^{-3/2}$ and then to $\varepsilon^{-1}$~\cite{Giannopoulos95,Giannopoulos96}.
Youssef later recovered the $\varepsilon^{-1}$ dependence by normalized restricted invertibility~\cite[Section~3.1]{Youssef14}.
If $C_{\mathrm{DR}}(\varepsilon)$ denotes the best constant in the proportional Dvoretzky--Rogers factorization, the presently known estimates give
$$ c\varepsilon^{-1/2}\ls C_{\mathrm{DR}}(\varepsilon)\ls C\varepsilon^{-1}. $$
The lower obstruction follows from Rudelson's construction of convex bodies with many contact points~\cite{Rudelson97}.
The later restricted invertibility theorem of Naor and Youssef exhibits a square root dependence on the deficit for a fixed linear system but retains additional spectral information on that system and does not yield the corresponding $\varepsilon^{-1/2}$ estimate for arbitrary John decompositions~\cite{NaorYoussef17}.
Thus the optimal dependence in the proportional Dvoretzky--Rogers theorem remains open.
If the lower endpoint $\varepsilon^{-1/2}$ were attainable, the standard argument relating proportional Dvoretzky--Rogers factorization to the distance from the cube would give the upper exponent $3/4$.

On the lower side, Szarek~\cite{Szarek90} obtained $c\sqrt n\ln n$, Tikhomirov~\cite{Tikhomirov19} proved a lower bound of order $n^{5/9}$ up to logarithmic factors and Friedland~\cite{Friedland26} improved the exponent to $4/7$ for the Gaussian Gluskin model.
At the time the present manuscript was completed, this was the best publicly available polynomial exponent on the lower side, leaving a gap between $4/7$ and $5/6$.

Following Friedland~\cite{Friedland26}, consider the Gaussian Gluskin polytope.
Let $m=n^3$, let $\Gamma$ be an $n\times m$ matrix with independent standard Gaussian entries and write $G_m=\Gamma(B_1^m)=\conv\{\pm g_1,\ldots,\pm g_m\}$ where $g_1,\ldots,g_m$ are the columns of $\Gamma$.

\begin{theorem}\label{thm:main}
There exist absolute constants $c>0$ and $n_0\in\mathbb N$ such that, for every $n\gr n_0$ and $m=n^3$,
$$ \Pp\left\{\BM(G_m,B_1^n)\gr c n^{5/8}(\ln n)^{-5/8}\right\}\gr1-\frac2n. $$
Consequently,
$$ R_\infty(n)\gr c n^{5/8}(\ln n)^{-5/8}. $$
\end{theorem}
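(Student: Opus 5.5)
We follow Friedland's scheme and change only the core geometric estimate. Suppose $\BM(G_m,B_1^n)\ls d$. Then there is $T\in GL_n$ with $B_1^n\subseteq T(G_m)\subseteq d\,B_1^n$. Equivalently, there are vectors $x_1,\ldots,x_n\in G_m$, namely the preimages $T^{-1}e_i$, such that every $g_j$ lies in $d\cdot\conv\{\pm x_1,\ldots,\pm x_n\}$. Each $x_i$ is a convex combination of the $\pm g_j$.

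The first step is Friedland's discretization. We replace the $x_i$ by points of a net of sparse convex combinations of the columns, coming from Maurey's empirical method. This costs a constant factor in $d$ and a $\ln n$ loss in the sparsity parameter. After it, it suffices to show the following, with high probability: for every fixed admissible configuration $x_1,\ldots,x_n$ built from a bounded set $J$ of column indices, the columns $g_j$ with $j\notin J$ cannot all lie in $d\,\conv\{\pm x_i\}$. Conditioning on $(g_j)_{j\in J}$ makes the remaining columns independent Gaussians. Each one has probability at most $p$ of landing in the fixed polytope $d\conv\{\pm x_i\}$, so all $m-|J|\approx n^3$ of them land there with probability at most $p^{\,m-|J|}$. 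This must beat the cardinality of the net, roughly $\exp(Cn\cdot s\ln n)$ for sparsity $s$. So we need $p\ls \exp(-C s\ln n/n^2)$; with $m=n^3$, showing $p\ls 1-c$ is more than enough.

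The heart of the argument is bounding the Gaussian measure of $P=d\conv\{\pm x_1,\ldots,\pm x_n\}$ for a fixed configuration. We use the $K/U$ decomposition. We split $\{x_i\}$ into $K$ vectors, which have a large coefficient on few columns of $J$, and $U$ vectors, which are spread out. From the $K$ vectors we select a family to suppress and quotient out the rest, projecting onto the orthogonal complement $E$ of their span. In $E$ we bound every top-dimensional exterior product (determinant) of vectors drawn from the suppressed $K$ vectors together with the $U$ vectors, and we need these bounds simultaneously over the net. For the $U$ vectors, averaging over many Gaussian columns gives Euclidean length of order $\sqrt{n/s}$ plus concentration. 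For the suppressed $K$ vectors, the quotient kills their correlation with the removed ones. We then Löwner-normalize the projected polytope $P_E P$. A Dvoretzky--Rogers selection produces many nearly orthonormal contact directions, and the determinant bounds then control the volume, hence the axes, of the minimum volume ellipsoid containing $P_EP$. That ellipsoid has radius $r\ls C d\sqrt{s}$ times a power of $\ln n$. Maurey's empirical method, applied again, shows that the Gaussian measure of $P_E P$ in $E$ is at most that of a slightly enlarged ellipsoid. This is small once $r\ll \sqrt{\dim E}$. It gives $p\ls 1/2$ whenever $d\ls c n^{5/8}(\ln n)^{-5/8}$, after optimizing the threshold between $K$ and $U$ and the number of suppressed vectors.

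Finally, a union bound over the net, together with the probability of the typical events (column norms, and singular values of Gaussian submatrices, each failing with probability at most $1/n$), gives the bound $1-2/n$. The consequence for $R_\infty$ follows from polarity, $R_\infty(n)=\sup_K\BM(K,B_1^n)$. I expect the main obstacle to be the simultaneous determinant bounds in the quotient: they must hold uniformly over all choices, so the entropy must be balanced against Gaussian small-ball estimates for determinants. This is where the exponent $5/8$ is set. My first attempt would be a Cauchy--Binet expansion, combined with the conditional Gaussianity of the columns outside $J$ and a union bound over subsets of size $\dim E$.
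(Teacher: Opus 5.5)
Your outline follows the paper's architecture: discretization and powering, $K/U$ splitting, suppression and quotient, uniform exterior product bounds, L\"owner normalization with Dvoretzky--Rogers, and Maurey. However, the step that produces the exponent $5/8$ fails as you describe it. You bound the Gaussian measure of $P_EP$ by that of a slightly enlarged L\"owner ellipsoid and ask for its radius to be $\ll\sqrt{\dim E}$. The determinant bounds plus the Dvoretzky--Rogers selection control only $\det T$, i.e.\ the volume radius $R_L\ls C\sqrt r\,\delta$, where $r=\dim E$ and $\delta=\max\{r/n,s^{-1/2}\}$. They do not control the individual axes. Even granting this, an ellipsoid in $\R^r$ of volume radius $12\rho R_L$ can have Gaussian measure close to $1$ once $\rho R_L\gtrsim\sqrt r$, so this route needs $\rho\delta\ls c$. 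With $s=n^a$, $r=n^b$ this means $\rho\ls n^{\min\{1-b,a/2\}}$. Combined with the entropy condition $2b\gr1+a$, the best exponent is only $1/4$.

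The paper uses Maurey's method differently. The normalized polytope $Q'=T^{-1}\widetilde Q_{A,J}\subset B_2^r$, with at most $2n$ vertices, is covered by $\binom Nt$ sets $Q'_S+t^{-1/2}B_2^r$, $|S|=t$. Each of these has volume at most $\binom{t+r}{r}(4e/\sqrt{rt})^r$. Gaussian measure is then bounded by density times volume, so undoing the normalization costs only $\det T$. With $t\approx r/\Lambda$ this gives $\gamma\ls(C\rho\delta\sqrt{\Lambda/r})^r$. The extra factor $\sqrt{\Lambda/r}$ expresses that a polytope with few vertices fills only a tiny fraction of its L\"owner ellipsoid. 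It is exactly what moves the optimum to $s\approx n^{1/2}$, $r\approx n^{3/4}$, $\rho\approx n^{5/8}$.

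The probabilistic input is also misidentified. You need an upper tail bound holding simultaneously for all $r$-fold wedges, since large wedges are what make the L\"owner volume large; a small ball estimate is the wrong direction. The paper gets the tail $e^{-cD^2r^2}$ from Hadamard's inequality, AM--GM on singular values and a $\chi^2_{r^2}$ tail. For this tail to pay for the union bound, the entropy must be $O(r^2)$; the paper arranges entropy $O(sn\Lambda)$ together with $r^2\gr C_0sn\Lambda$. That requires ingredients absent from your plan:
\begin{itemize}
\item The $K/U$ split must be made inside each coefficient vector at level $1/s$, so $x_i=v_i+u_i$ with $v_i$ coming from an $s$-sparse coefficient vector. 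If you instead partition the $x_i$ and quotient out whole vectors supported on up to $n$ columns, the family of quotient subspaces has entropy of order $n(n-r)\Lambda$, which $e^{-cr^2}$ cannot pay for $r\ls n$.
\item The continuum of $U$ coefficient vectors must be reduced, by convexity of the wedge norm in each argument, to the $e^{Cs\Lambda}$ extreme points of $\mathcal C_s=B_1^m\cap s^{-1}B_\infty^m$.
\item Independence comes from splitting the coefficient space as $\widehat E_B\oplus\widehat E_B^\perp$, so that $R_B^*P_{H_B}\Gamma=G_BV_B^*$ with $G_B$ standard Gaussian given $H_B$.
\end{itemize}
Conditioning on the unexposed columns, as you propose, cannot help: every vector entering the wedges is a combination of exposed columns. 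In the paper the wedge bound is a global event $\mathcal E$, and it enters the powering argument only through the $\mathcal F_A$-measurable events $\mathcal H_A\supseteq\mathcal E$.

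Finally, suppressing vectors by the factor $r/n$ shrinks the polytope, so bounding the measure after one chosen suppression proves nothing about the original polytope. You need the averaging inequality $\gamma_n(tQ)\ls\frac2{\binom nr}\sum_{|J|=r}\gamma_n(3tQ_J)$, which holds because each point of $tQ$ lies in $3tQ_J$ for at least half of the sets $J$.
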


\medskip
\noindent \textbf{Chronology and independent concurrent work.}
The present manuscript including Theorem~\ref{thm:main} and its proof was completed and submitted for publication before the author learned of Friedland's concurrent $5/8$ result.
On August 17, 2026, before the present paper had appeared publicly the author communicated the completed manuscript to Friedland.
In response Friedland informed the author that he had independently obtained the same polynomial exponent and shared a circulation copy of his manuscript.
The version subsequently posted as~\cite{FriedlandFiveEighths} proves the estimate
$$ d_{\mathrm{BM}}(G_m,B_1^n)\gr c n^{5/8}(\ln n)^{-1/4}. $$
Friedland records in~\cite{FriedlandFiveEighths} that his two event proof architecture and an earlier bound of order $n^{5/8}(\ln n)^{-3/8}$ had been obtained before he received the present manuscript and that a later optimization of the small coordinate estimate improved the logarithmic factor to $(\ln n)^{-1/4}$.
The two proofs share the discretization, conditioning and $K/U$ framework of Friedland's preceding paper~\cite{Friedland26} but diverge after this common reduction.
The present proof proceeds by a different mechanism described below.
Friedland's proof instead introduces two uniform quotient events, one controls successive directions of the big coordinate parts after every admissible preceding span while the other places the whole small coordinate cloud near a low dimensional subspace after every admissible quotient; suppression, a local Maurey argument and Gram--Schmidt width estimates then give the Gaussian measure estimate.
The two $5/8$ results were developed independently.

To explain the argument, fix a candidate distance $\rho$ and put $\Lambda=\ln(n\rho)$.
Friedland's discretization and conditional estimate are used; the latter is based on Tikhomirov's Lemma~3.1 argument~\cite[Sections~2.3--2.5]{Friedland26}\cite[Lemma~3.1]{Tikhomirov19}.
The columns of a discretized operator are split at level $1/s$ into their $K$ and $U$ parts.
Instead of separating the argument according to the number of $U$ vectors in a representation, choose $r$ of the $K$ vectors, suppress them by the factor $r/n$, and quotient out the remaining $n-r$ of the $K$ vectors.
The quotient has dimension $r$.
A simultaneous Gaussian estimate controls every $r$ fold exterior product formed from the surviving suppressed $K$ vectors and the $U$ vectors.
The entropy of the coefficient classes is paid by the $e^{-cr^2}$ determinant tail provided $ r^2\gr Csn\Lambda. $
The projected $K$ vectors have coefficient scale at most $r/n$, while the projected $U$ vectors have coefficient scale at most $s^{-1/2}$.
Writing $ \delta=\max\left\{ r/n,1/{\sqrt s}\right\}, $ the exterior product estimate and a Dvoretzky--Rogers selection give a bound of order $\sqrt r\,\delta$ for the volume radius of the minimum volume containing ellipsoid of the whole projected polytope.
Maurey's empirical method after this affine normalization gives the second condition $ \rho\delta\sqrt{\Lambda/r}\ls c. $
Balancing the two coefficient scales and the entropy condition gives
$$ s\approx n^{1/2}\Lambda^{-1/2},\qquad r\approx n^{3/4}\Lambda^{1/4},\qquad \rho\approx n^{5/8}\Lambda^{-5/8}. $$
Since $\Lambda\approx\ln n$, this yields Theorem~\ref{thm:main}.

\noindent \textbf{Organization of the paper.}
Section~2 recalls the part of Friedland's reduction used here and fixes the notation.
Section~3 proves the simultaneous exterior product estimate used in the quotient argument.
Section~4 combines suppression with the L\"owner and Maurey estimates to obtain the uniform Gaussian measure bound.
Section~5 contains the choice of parameters and the proof of Theorem~\ref{thm:main}.

\section{Reduction and notation}

The notation $|\cdot|$ and $\langle\cdot,\cdot\rangle$ is used for the Euclidean norm and inner product, $S^{d-1}$ for the Euclidean unit sphere in $\R^d$, $[N]:=\{1,\ldots,N\}$, and $e_1,\ldots,e_d$ for the standard basis of $\R^d$.
Lebesgue measure on a $d$ dimensional Euclidean space is denoted by $\vol_d$ and the indicator of an event or set $E$ by $\mathbf 1_E$.
If $H$ is a Euclidean subspace, $P_H$ denotes the orthogonal projection onto $H$ and $\gamma_H$ the standard Gaussian measure on $H$; when $H=\R^d$, the notation $\gamma_d$ is used.
For a vector $x$, $\supp(x):=\{j:x_j\ne0\}$, while $\operatorname{range}M$ and $\rank M$ denote the range and rank of a matrix $M$.
For a matrix $M$, $M^*$ denotes its transpose, $\|M\|_{\mathrm{HS}}$ its Hilbert--Schmidt norm, and $I_d$ the identity operator on $\R^d$.
For vectors $x,y$, $x\otimes y$ denotes the rank one operator $z\mapsto\langle z,y\rangle x$, and $\operatorname{ext}(K)$ denotes the set of extreme points of a convex body $K$.
If $x_1,\ldots,x_k$ lie in a Euclidean space, $\|x_1\wedge\cdots\wedge x_k\|$ denotes the $k$ dimensional volume of the parallelepiped generated by them.
For $S\subset[m]$, $\Gamma_S$ denotes the submatrix of $\Gamma$ formed by the columns indexed by $S$, while $A_S$ denotes the submatrix of an $m\times n$ matrix $A$ formed by the rows indexed by $S$.
Throughout, $c,C,c_1,C_1,\ldots$ are positive absolute constants whose values may change from line to line.
The notation $a\approx b$ means that there are absolute constants $c_1,c_2>0$ such that $c_1a\ls b\ls c_2a$.

Fix $\rho\gr1$ and set
\begin{equation}\label{eq:lambda} \Lambda:=\ln(n\rho),\qquad \eps:=\frac1{\rho n^2}.\end{equation}
Since $m=n^3$ and $\eps\rho n^2=1$, Friedland's discretization lemma~\cite[Lemma~2.2(ii)--(iii)]{Friedland26} provides a finite class $\mathcal A_\eps$ of $m\times n$ matrices whose columns are supported on at most $n$ coordinates, have coordinates in $\eps\mathbb Z$, and have $\ell_1$ norm at most one, such that
\begin{equation}\label{eq:Anet} \ln|\mathcal A_\eps|\ls Cn^2\Lambda,\end{equation}
and
$$ \{\BM(G_m,B_1^n)\ls\rho\}\subseteq \bigcup_{A\in\mathcal A_\eps}\{G_m\subset2\rho\Gamma A(B_1^n)\}. $$

Fix $A\in\mathcal A_\eps$.
Write $A^{(1)},\ldots,A^{(n)}$ for its columns and set
$$ S(A):=\bigcup_{i=1}^n\supp(A^{(i)}),\qquad N(A):=m-|S(A)|. $$
Then $|S(A)|\ls n^2$ and $N(A)\gr m-n^2$.
Let $\mathcal F_A$ be the sigma-field generated by the Gaussian columns $g_j$, $j\in S(A)$.
Following Friedland's terminology, after conditioning on $\mathcal F_A$ the exposed matrix is denoted by $\Gamma_A:=\Gamma_{S(A)} $ and $\mathcal K_A(\rho;\Gamma_A):=2\rho\,\Gamma_A A_{S(A)}(B_1^n)$.
Friedland's Lemma~2.4, called the Powering Lemma there~\cite{Friedland26}, implements the conditioning argument of Tikhomirov~\cite[Lemma~3.1]{Tikhomirov19} and gives, conditionally on $\mathcal F_A$,
\begin{equation}\label{eq:powering} \Pp\bigl(G_m\subset2\rho\Gamma A(B_1^n)\mid\mathcal F_A\bigr) \ls \gamma_n\bigl(\mathcal K_A(\rho;\Gamma_A)\bigr)^{N(A)}.\end{equation}

Let $s\in\{1,\ldots,n\}$.
Following the $K/U$ decomposition used by Tikhomirov~\cite[Section~3]{Tikhomirov19} and Friedland~\cite[Section~2.5]{Friedland26}, write $A^{(i)}=a_i^{(K)}+a_i^{(U)}$ by splitting the column $A^{(i)}$ at level $1/s$.
Thus $(a_i^{(K)})_j=A^{(i)}_j\mathbf 1_{\{|A^{(i)}_j|\gr1/s\}}$ and $a_i^{(U)}=A^{(i)}-a_i^{(K)}$.
Since every column of $A$ has $\ell_1$ norm at most one, at most $s$ coordinates can have absolute value at least $1/s$.
Hence
$$ |\supp(a_i^{(K)})|\ls s,\qquad \|a_i^{(U)}\|_1\ls1,\qquad \|a_i^{(U)}\|_\infty\ls\frac1s. $$
The symmetric polytope
\begin{equation}\label{eq:Cs} \mathcal C_s:=B_1^m\cap s^{-1}B_\infty^m\end{equation}
will be used repeatedly.
Note that $a_i^{(U)}\in\mathcal C_s$ and $|a_i^{(U)}|\ls s^{-1/2}$.
Put $v_i:=\Gamma a_i^{(K)}$ and $u_i:=\Gamma a_i^{(U)}$.
These vectors are $\mathcal F_A$-measurable because all the coefficient vectors are supported in $S(A)$; on the fiber determined by $\Gamma_A$, they are obtained by applying $\Gamma_A$ to the restrictions of $a_i^{(K)}$ and $a_i^{(U)}$ to $S(A)$.

\section{The exterior product estimate}

Two elementary counting facts are recorded first.
Fix an extreme point $x$ of $\mathcal C_s$ and a closed orthant containing $x$.
Then $x$ is also an extreme point of the intersection of $\mathcal C_s$ with this orthant.
After changing signs and multiplying by $s$, this intersection becomes the polytope
$$ \left\{y\in\R^m:0\ls y_j\ls1,\ \sum_{j=1}^m y_j\ls s\right\}. $$
Its vertices are the $0$-$1$ vectors with at most $s$ coordinates equal to one.
Indeed, if $\sum_jy_j<s$ and some coordinate satisfies $0<y_j<1$, that coordinate can be perturbed in both directions.
If $\sum_jy_j=s$ and two coordinates are fractional, they can be perturbed in opposite directions while preserving the sum.
If $\sum_jy_j=s$ and exactly one coordinate is fractional, the integrality of $s$ and of all the remaining coordinates gives a contradiction.
It follows that every extreme point of $\mathcal C_s$ has the form
$$ \frac1s\sum_{j\in J}\varepsilon_j e_j, \qquad |J|\ls s,\quad \varepsilon_j\in\{-1,1\}, $$
and hence
\begin{equation}\label{eq:extcount} |\operatorname{ext}(\mathcal C_s)| \ls\sum_{\ell=0}^s2^\ell\binom m\ell \ls2^s\sum_{\ell=0}^s\binom m\ell\ls\left(\frac{2em}{s}\right)^s.\end{equation}

For $1\ls p\ls n$, let $\mathcal T_p$ be the set of $m\times p$ matrices whose columns have coordinates in $\eps\mathbb Z$, are supported on at most $s$ coordinates and have $\ell_1$ norm at most one.
Set also $\mathcal T_0:=\{0\}$, where $0$ denotes the unique linear map from the zero dimensional space into $\R^m$.
This convention is used only for the endpoint $r=n$, where $n-r=0$.
In Appendix~B.1 Friedland considers the smaller class of matrices which occur as $K$ submatrices and obtains the same columnwise cardinality bound in the range needed there~\cite[Appendix~B.1]{Friedland26}.
The larger deterministic class above is used.
For $p\gr1$, one column has at most
$$ \sum_{\ell=0}^s\binom m\ell\ls\left(\frac{em}{s}\right)^s $$
choices for the support.
Once the support is fixed every nonzero coordinate lies on the $\eps$ grid in $[-1,1]$, so there are at most $(C/\eps)^s$ possible values.
Hence
\begin{equation}\label{eq:Tcount} |\mathcal T_p|\ls\left(\frac{Cm}{s\eps}\right)^{sp}.\end{equation}
By $m=n^3$, $\eps^{-1}=\rho n^2$ and~\eqref{eq:lambda}, it follows that $ \ln|\mathcal T_p|\ls Csp\Lambda$ and $\ln|\operatorname{ext}(\mathcal C_s)|\ls Cs\Lambda. $

Let
$$ \mathcal D_s:=\{x\in(\eps\mathbb Z)^m:|\supp(x)|\ls s,\ \|x\|_1\ls1\}. $$
Every $K$ part $a_i^{(K)}$ belongs to $\mathcal D_s$.
The same one column count gives
\begin{equation}\label{eq:Dcount} \ln|\mathcal D_s|\ls Cs\Lambda.\end{equation}

The next lemma is an elementary Gaussian estimate; the proof is included because the $e^{-cr^2}$ tail is the point that pays for the simultaneous entropy below.

\begin{lemma}\label{lem:dettail}
Let $q,r\in\mathbb N$.
Let $G:\R^q\to\R^r$ be a random operator whose matrix has independent standard Gaussian entries and let $d_1,\ldots,d_r\in\R^q$ satisfy $|d_i|\ls a_i$, where $a_i>0$.
There are absolute constants $D_0,c>0$ such that for every $D\gr D_0$,
$$ \Pp\left\{\|Gd_1\wedge\cdots\wedge Gd_r\|>(D\sqrt r)^r\prod_{i=1}^ra_i\right\}\ls e^{-cD^2r^2}. $$
\end{lemma}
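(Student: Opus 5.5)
We reduce to the case where the vectors are orthonormal and then use the exact distribution of the Gaussian determinant. First, the exterior product is multilinear, so we may assume $|d_i|=a_i$ (if some $d_i$ is shorter, replace it by $d_i/|d_i|\cdot a_i$; this only increases the volume, and if $d_i=0$ the volume vanishes). After dividing by $\prod a_i$ we may assume $|d_i|=1$.

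Let $D=[d_1,\ldots,d_r]$ be the $q\times r$ matrix with these columns. If $\rank D<r$ the exterior product is zero and there is nothing to prove. Otherwise write $D=QR$ with $Q$ a $q\times r$ matrix with orthonormal columns and $R$ an $r\times r$ upper triangular matrix. Then
$$\|Gd_1\wedge\cdots\wedge Gd_r\|=|\det(GD)|=|\det(GQ)|\,|\det R|.$$
By Hadamard's inequality, $|\det R|=\|d_1\wedge\cdots\wedge d_r\|\ls\prod|d_i|=1$. By rotation invariance, $GQ$ is an $r\times r$ standard Gaussian matrix $W$. It therefore suffices to show
$$\Pp\{|\det W|>(D\sqrt r)^r\}\ls e^{-cD^2r^2}.$$

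We prove this via Hadamard's inequality applied to the columns of $W$:
$$|\det W|\ls\prod_{i=1}^r|w_i|\ls\Bigl(\frac1r\sum_i|w_i|^2\Bigr)^{r/2}=\Bigl(\frac{\|W\|_{\mathrm{HS}}^2}{r}\Bigr)^{r/2}.$$
Hence the event $|\det W|>(D\sqrt r)^r$ forces $\|W\|_{\mathrm{HS}}^2>D^2r^2$. Now $\|W\|_{\mathrm{HS}}^2$ is a $\chi^2$ variable with $r^2$ degrees of freedom, with mean $r^2$. For $D^2\gr D_0^2\gr 2$, the standard chi-square tail (or the Laplace transform bound $\mathbb E e^{\|W\|_{\mathrm{HS}}^2/4}=2^{r^2/2}$ together with Markov's inequality) gives
$$\Pp\{\|W\|_{\mathrm{HS}}^2>D^2r^2\}\ls 2^{r^2/2}e^{-D^2r^2/4}\ls e^{-D^2r^2/8}$$
once $D_0$ is large enough. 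This yields the claim with $c=1/8$.

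The only point requiring any care is the reduction to a square Gaussian matrix via the QR factorization together with the bound $|\det R|\ls1$; the tail estimate itself is routine. Note that the $r^2$ in the exponent comes precisely from the $r^2$ degrees of freedom of $\|W\|_{\mathrm{HS}}^2$, which is why the proof passes through the Hilbert--Schmidt norm rather than bounding the columns of $W$ individually.
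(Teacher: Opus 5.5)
Your proof is correct and follows essentially the same route as the paper. Both arguments reduce to $|\det W|\sqrt{\det(D^*D)}$ with $W$ a square standard Gaussian matrix, bound $\sqrt{\det(D^*D)}$ by Hadamard, use $|\det W|^{2/r}\ls\|W\|_{\mathrm{HS}}^2/r$, and finish with a $\chi^2_{r^2}$ tail bound. The differences are only cosmetic: you reduce via QR and rotation invariance where the paper uses the identity that $GD_1$ has the law of $G_r(D_1^*D_1)^{1/2}$, you get the Hilbert--Schmidt bound from column-wise Hadamard plus AM--GM rather than AM--GM on singular values, and you use the explicit moment generating function at $1/4$ in place of the optimized Chernoff bound. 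Your preliminary normalization to $|d_i|=1$ is harmless but unnecessary.
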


\begin{proof}
Let $D_1=[d_1\ \cdots\ d_r]$.
The rows of the $r\times r$ matrix $GD_1$ are independent centered Gaussian vectors in $\R^r$ with covariance $D_1^*D_1$.
Hence $GD_1$ has the same distribution as $G_r(D_1^*D_1)^{1/2}$, where $G_r$ is a standard $r\times r$ Gaussian matrix.
Therefore $\|Gd_1\wedge\cdots\wedge Gd_r\|$ has the same distribution as $|\det G_r|\sqrt{\det(D_1^*D_1)}$.
Hadamard's inequality gives $\sqrt{\det(D_1^*D_1)}\ls\prod_{i=1}^r|d_i|\ls\prod_{i=1}^ra_i$.
By the inequality between the arithmetic and geometric means applied to the squared singular values, $ |\det G_r|^{2/r}\ls  {\|G_r\|_{\mathrm{HS}}^2}/{r}. $
It follows that
$$ \Pp\left\{\|Gd_1\wedge\cdots\wedge Gd_r\|>(D\sqrt r)^r\prod_{i=1}^ra_i\right\}\ls\Pp\left\{\|G_r\|_{\mathrm{HS}}^2>D^2r^2\right\}. $$
Since $\|G_r\|_{\mathrm{HS}}^2$ has the $\chi^2_{r^2}$ distribution, the Chernoff bound gives
$$ \Pp\left\{\|G_r\|_{\mathrm{HS}}^2>D^2r^2\right\}\ls\exp\left(-\frac{r^2}{2}(D^2-1-\ln D^2)\right)\ls e^{-cD^2r^2} $$
for $D\gr D_0$, after choosing $D_0$ sufficiently large.
\end{proof}

The following proposition is the probabilistic input used in the proof.
The quotient is realized by an orthogonal decomposition of the coefficient space in order to retain an independent Gaussian block.
The formulation includes rank deficient coefficient matrices by completing their ranges deterministically before the Gaussian matrix is sampled.

\begin{proposition}\label{prop:wedge}
There are absolute constants $C_0,C_1,c>0$ with the following property.
Let $1\ls r\ls n$ and suppose
\begin{equation}\label{eq:wedgecondition} r^2\gr C_0sn\Lambda.\end{equation}
For every $B\in\mathcal T_{n-r}$ choose deterministically an $(n-r)$ dimensional subspace $\widehat E_B\subset\R^m$ such that $\operatorname{range}B\subset\widehat E_B$ and put $H_B=(\Gamma\widehat E_B)^\perp$.
Then with probability at least $1-e^{-cr^2}$ simultaneously for every $B\in\mathcal T_{n-r}$, every integer $0\ls p\ls r$, every $x_1,\ldots,x_p\in\mathcal D_s$ and every $z_1,\ldots,z_{r-p}\in\mathcal C_s$,
\begin{equation}\label{eq:wedge} \left\|\bigwedge_{i=1}^pP_{H_B}\Gamma x_i\wedge\bigwedge_{j=1}^{r-p}P_{H_B}\Gamma z_j\right\|\ls(C_1\sqrt r)^rs^{-(r-p)/2}.\end{equation}
Here an empty exterior product is omitted.
On the same event, $\dim H_B=r$ for every $B\in\mathcal T_{n-r}$.
\end{proposition}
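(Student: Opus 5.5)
The plan is to fix all discrete data, reduce each fixed exterior product to Lemma~\ref{lem:dettail}, and then take a union bound whose entropy is absorbed by the $e^{-cr^2}$ tail under~\eqref{eq:wedgecondition}.

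\textbf{Step 1: the identity for a fixed configuration.} Fix $B\in\mathcal T_{n-r}$ and write $\widehat E=\widehat E_B$. Let $F=\widehat E^\perp\subset\R^m$, so $\dim F=m-(n-r)$. Decompose $\Gamma=\Gamma P_{\widehat E}+\Gamma P_F$. The blocks $\Gamma|_{\widehat E}$ and $\Gamma|_F$ are independent standard Gaussian operators, being $\Gamma$ restricted to orthogonal subspaces.

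For vectors $y_1,\dots,y_r\in\R^m$ we have
\[
\|\Gamma e_1'\wedge\cdots\wedge\Gamma e_{n-r}'\wedge\Gamma y_1\wedge\cdots\wedge\Gamma y_r\|=\|\Gamma e'_1\wedge\cdots\wedge\Gamma e'_{n-r}\|\cdot\Bigl\|\bigwedge_k P_{H_B}\Gamma y_k\Bigr\|,
\]
where $e'_i$ is an orthonormal basis of $\widehat E$. In the left side we may replace each $y_k$ by $P_Fy_k$, since the components in $\widehat E$ are killed by the wedge with the $\Gamma e'_i$.

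Conditionally on $\Gamma|_{\widehat E}$, the vectors $\Gamma P_F y_k$ are $G d_k$ for an independent standard Gaussian $G$. Here $d_k=P_Fy_k$, viewed in $F$, with $|d_k|\le|y_k|$. Apply the projection $P_{H_B}$, which is determined by $\Gamma|_{\widehat E}$, and identify $H_B$ with $\R^r$ isometrically (on the a.s.\ event $\dim\Gamma\widehat E=n-r$, which gives $\dim H_B=r$). By rotation invariance, $P_{H_B}G$ is a standard Gaussian operator $F\to\R^r$. Hence
\[
\bigwedge_k P_{H_B}\Gamma y_k \;=\; \bigwedge_k (P_{H_B}G)\,d_k ,
\]
the terms $P_{H_B}\Gamma P_{\widehat E}y_k$ vanishing because $\Gamma\widehat E\perp H_B$. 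Lemma~\ref{lem:dettail} then applies conditionally, hence unconditionally. For $x_i\in\mathcal D_s$ we have $|x_i|\le\|x_i\|_1\le1$, and for $z_j\in\mathcal C_s$ we have $|z_j|\le s^{-1/2}$. This gives the bound $(D\sqrt r)^r s^{-(r-p)/2}$ with failure probability $e^{-cD^2r^2}$.

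\textbf{Step 2: reduction to extreme points.} The left side of~\eqref{eq:wedge} is the absolute value of a multilinear function in each $z_j$, so its maximum over the polytope $\mathcal C_s$ is attained at extreme points. Thus it suffices to take $z_j\in\operatorname{ext}(\mathcal C_s)$ and $x_i\in\mathcal D_s$.

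\textbf{Step 3: union bound.} We bound the number of configurations and absorb it into the tail.
\begin{itemize}
\item The number of choices of $B$ is at most $e^{Cs(n-r)\Lambda}\le e^{Csn\Lambda}$, by~\eqref{eq:Tcount}.
\item The number of choices of $p$ is at most $r+1$.
\item The number of choices of the $x$'s and $z$'s is at most $e^{Csr\Lambda}$, by~\eqref{eq:Dcount} and~\eqref{eq:extcount}.
\end{itemize}
The total entropy is therefore at most $e^{C'sn\Lambda+\ln(n+1)}$. Fix $D=\max(D_0,1)$, say. Choosing $C_0$ large relative to $C'/c D^2$, condition~\eqref{eq:wedgecondition} gives
\[
C'sn\Lambda+\ln(n+1)\le \tfrac{c}{2}D^2r^2 .
\]
The $\ln(n+1)$ term is harmless because $sn\Lambda\ge\ln n$. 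The total failure probability is then at most $e^{-cD^2r^2/2}$, and we set $C_1=D$.

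The event $\dim H_B=r$ for all $B$ holds almost surely, since $\Gamma$ restricted to a fixed $(n-r)$-dimensional subspace is a.s.\ injective and there are only finitely many $B$. The case $r=n$ uses $\mathcal T_0=\{0\}$, for which $\widehat E=\{0\}$ and $H_B=\R^n$.

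\textbf{Main obstacle.} The delicate point is Step~1: justifying that, after projecting onto the random subspace $H_B$, the vectors $P_{H_B}\Gamma y_k$ behave like a fresh Gaussian operator applied to $P_Fy_k$. This is what the orthogonal split of the coefficient space, together with the conditioning on $\Gamma|_{\widehat E}$, secures. The measurability of $H_B$ with respect to $\Gamma|_{\widehat E}$ alone must be checked carefully, and it holds because $\widehat E_B$ is chosen deterministically before $\Gamma$ is sampled.
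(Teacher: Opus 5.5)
Your proposal is correct and follows the paper's proof essentially step for step: the orthogonal split of $\R^m$ into $\widehat E_B$ and $\widehat E_B^\perp$, then conditioning on the $\widehat E_B$ block so that $H_B$ is fixed while the projected $\widehat E_B^\perp$ block is a fresh standard Gaussian operator into $\R^r$, then Lemma~\ref{lem:dettail} with $|x_i|\ls1$ and $|z_j|\ls s^{-1/2}$, reduction to $\operatorname{ext}(\mathcal C_s)$ by convexity in each $z_j$, and finally a union bound of size $e^{Csn\Lambda}$ absorbed by~\eqref{eq:wedgecondition}. Two small remarks: the base-times-height identity in your Step~1 is never used, and the factor counting the choices of $p$ is handled more cleanly as in the paper, via $\ln(r+1)\ls r^2$ and taking $D$ sufficiently large, rather than fixing $D=\max(D_0,1)$.
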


\begin{proof}
For every fixed $B\in\mathcal T_{n-r}$, the restriction of $\Gamma$ to the fixed subspace $\widehat E_B$ is injective almost surely.
Since $\mathcal T_{n-r}$ is finite this holds simultaneously for every $B$ outside a null event.
Work on this event.

Fix $B\in\mathcal T_{n-r}$.
Choose matrices $U_B,V_B$ whose columns are orthonormal bases of $\widehat E_B$ and $\widehat E_B^\perp$ respectively and put $X_B=\Gamma U_B$ and $Y_B=\Gamma V_B$.
Since $[U_B\ V_B]$ is a fixed orthogonal matrix on $\R^m$, rotational invariance shows that $\Gamma[U_B\ V_B]=[X_B\ Y_B]$ is again a standard Gaussian matrix.
In particular, the two column blocks $X_B$ and $Y_B$ are independent standard Gaussian matrices.
The matrix $X_B$ has rank $n-r$ and hence $\dim H_B=r$.

Condition on a realization of $X_B$ and choose an orthonormal basis matrix $R_B$ of $H_B$.
Then $R_B$ is fixed while $Y_B$ remains an independent standard Gaussian matrix.
Since $R_B^*R_B=I_r$ the operator $G_B:=R_B^*Y_B:\R^{m-n+r}\to\R^r$ is standard Gaussian.
For $w\in\R^m$, write $w=U_Bu+V_Bv$ where $u=U_B^*w$ and $v=V_B^*w$.
Since $R_B^*X_B=0$ and $R_B^*:H_B\to\R^r$ is an isometry, $ R_B^*P_{H_B}\Gamma w=R_B^*Y_BV_B^*w=G_BV_B^*w. $

Fix $0\ls p\ls r$, $x_1,\ldots,x_p\in\mathcal D_s$ and $z_1,\ldots,z_{r-p}\in\mathcal C_s$.
For every $i\ls p$, $ |V_B^*x_i|\ls|x_i|\ls\|x_i\|_1\ls1, $ while for every $j\ls r-p$, $ |V_B^*z_j|\ls|z_j|\ls\sqrt{\|z_j\|_1\|z_j\|_\infty}\ls s^{-1/2}. $
Lemma~\ref{lem:dettail} therefore gives, conditionally on $X_B$,
$$\Pp\left\{\left. \left\|\bigwedge_{i=1}^pP_{H_B}\Gamma x_i\wedge\bigwedge_{j=1}^{r-p}P_{H_B}\Gamma z_j\right\| >(D\sqrt r)^rs^{-(r-p)/2} \,\right|\,X_B\right\} \ls e^{-cD^2r^2}.$$
Averaging over $X_B$ gives the same unconditional estimate for every fixed choice of $B,p,x_1,\ldots,x_p$ and $z_1,\ldots,z_{r-p}$.

For fixed $B$, $p$, $x_1,\ldots,x_p$ and $\Gamma$ the exterior product norm is convex in each $z_j$ separately.
Successively maximizing in $z_1,\ldots,z_{r-p}$ therefore gives
$$\sup_{z_1,\ldots,z_{r-p}\in\mathcal C_s} \left\|\bigwedge_{i=1}^pP_{H_B}\Gamma x_i\wedge\bigwedge_{j=1}^{r-p}P_{H_B}\Gamma z_j\right\| = \max_{z_1,\ldots,z_{r-p}\in\operatorname{ext}(\mathcal C_s)} \left\|\bigwedge_{i=1}^pP_{H_B}\Gamma x_i\wedge\bigwedge_{j=1}^{r-p}P_{H_B}\Gamma z_j\right\|.$$
Thus it is enough to take a union bound over $B\in\mathcal T_{n-r}$, $x_1,\ldots,x_p\in\mathcal D_s$ and $z_1,\ldots,z_{r-p}\in\operatorname{ext}(\mathcal C_s)$.
By~\eqref{eq:Tcount}, \eqref{eq:Dcount} and~\eqref{eq:extcount}, for each fixed $p$ the logarithm of the number of choices is at most $ Cs(n-r)\Lambda+Csp\Lambda+Cs(r-p)\Lambda\ls Csn\Lambda. $
After summing over $0\ls p\ls r$, the probability of failure is at most $ (r+1)\exp(Csn\Lambda-cD^2r^2). $
By~\eqref{eq:wedgecondition}, $ Csn\Lambda\ls({C}/{C_0})r^2, $ while $\ln(r+1)\ls r^2$.
Choose first $D\gr D_0$ sufficiently large and set $C_1=D$ and then choose $C_0$ sufficiently large.
The last probability is then at most $e^{-c'r^2}$.
Changing the absolute constant $c$ completes the proof.
\end{proof}

\section{Suppression and the Gaussian measure estimate}

For a full dimensional symmetric convex body $Q\subset\R^k$, let $\mathcal E_L(Q)=TB_2^k$ be its minimum volume containing ellipsoid, where $T$ is positive definite, and put $R_L(Q):=(\det T)^{1/k}$.
For a symmetric convex body $K$, write $K^\circ=\{y\in\R^k:|\langle x,y\rangle|\ls1\ \hbox{for every }x\in K\}$ for its polar body and $h_K(u)=\sup_{x\in K}\langle x,u\rangle$ for its support function.

The next deterministic lemma converts the exterior product bound into the affine scale used later.
The successive selection from the John contact points in its proof is the classical Dvoretzky--Rogers argument~\cite{DvoretzkyRogers50}; it is included in full in the present L\"owner formulation.
The John decomposition is used in the symmetric form recorded in~\cite[Theorem~2.1.15]{AAGM15}.

\begin{lemma}\label{lem:loewner}
Let $Q=\conv\{\pm x_1,\ldots,\pm x_N\}\subset\R^r$ be full dimensional and suppose, for some $\Delta>0$, that $|\det(x_{i_1},\ldots,x_{i_r})|\ls\Delta^r$ for every $i_1,\ldots,i_r$.
Then
\begin{equation}\label{eq:loewner} R_L(Q)\ls\sqrt e\,\Delta.\end{equation}
\end{lemma}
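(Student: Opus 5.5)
Normalize by the Löwner ellipsoid. Let $\mathcal E_L(Q)=TB_2^r$ and put $y_i=T^{-1}x_i$, so that $Q'=T^{-1}Q=\conv\{\pm y_1,\ldots,\pm y_N\}$ has minimal volume ellipsoid $B_2^r$. Every $r$-fold determinant scales by $\det T^{-1}$:
$$|\det(y_{i_1},\ldots,y_{i_r})|\ls \Delta^r/\det T=\bigl(\Delta/R_L(Q)\bigr)^r .$$
It therefore suffices to find one $r$-tuple with $|\det(y_{i_1},\ldots,y_{i_r})|\gr e^{-r/2}$. This gives $R_L(Q)^r\ls e^{r/2}\Delta^r$, which is~\eqref{eq:loewner}.

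Apply John's theorem in its symmetric form \cite[Theorem~2.1.15]{AAGM15} to the body $Q'$ inside $B_2^r$. There are contact points $w_1,\ldots,w_M\in\partial Q'\cap S^{r-1}$ and weights $c_k>0$ with $\sum_k c_k w_k\otimes w_k=I_r$. Since $Q'\subset B_2^r$ and $w_k\in S^{r-1}\cap Q'$, each $w_k$ is an extreme point of $Q'$: a point of the sphere cannot be a nontrivial convex combination of points in the ball. Hence each $w_k$ equals some $\pm y_i$, so it suffices to select $r$ of the contact points with large determinant.

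The selection is the Dvoretzky--Rogers procedure. Choose $z_1,\ldots,z_r$ among the $w_k$ successively. Having chosen $z_1,\ldots,z_{j-1}$, let $F_{j-1}$ be their span, of dimension $j-1$. Then
$$\sum_k c_k|P_{F_{j-1}^\perp}w_k|^2=\operatorname{tr}P_{F_{j-1}^\perp}=r-j+1,\qquad \sum_k c_k=r,$$
so some contact point $z_j$ satisfies $|P_{F_{j-1}^\perp}z_j|^2\gr (r-j+1)/r$. This quantity is positive, so the new vector raises the dimension. The determinant is the product of successive distances, so
$$|\det(z_1,\ldots,z_r)|^2\gr\prod_{j=1}^r\frac{r-j+1}{r}=\frac{r!}{r^r}\gr e^{-r},$$
using $r!\gr (r/e)^r$. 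This is the required $e^{-r/2}$ bound.

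The only delicate points are bookkeeping. First, the Löwner ellipsoid of $Q$ has the form $TB_2^r$ with $T$ positive definite by symmetry, so after the affine normalization $B_2^r$ is the minimal volume ellipsoid of $Q'$ and John's decomposition applies to it. Second, the contact points must be vertices of $Q'$, i.e. among the $\pm y_i$; this was handled by the extreme-point remark above. The determinant bound on every $r$-tuple, including those with repeated indices, is the only hypothesis used.
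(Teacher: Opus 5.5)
Your proof is correct and follows the paper's argument essentially step by step: L\"owner normalization, John's decomposition with contact points that must be signed generators $\pm y_i$ by strict convexity of $B_2^r$, and the Dvoretzky--Rogers successive selection giving $|\det|\gr\sqrt{r!/r^r}\gr e^{-r/2}$. The only difference is that the paper obtains the John decomposition by passing to the polar body $(Q')^\circ$, whose maximal volume ellipsoid is $B_2^r$, whereas you invoke the L\"owner (minimal volume ellipsoid) form of John's theorem directly; if the cited theorem is stated for maximal volume ellipsoids, your citation needs that one polarity remark.
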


\begin{proof}
Let $\mathcal E_L(Q)=TB_2^r$ and put $z_i=T^{-1}x_i$ and $Q'=T^{-1}Q$.
Then $Q'\subset B_2^r$ and $B_2^r$ is the minimum volume containing ellipsoid of $Q'$.
By polarity, $B_2^r$ is the maximal volume ellipsoid contained in $(Q')^\circ$.
The symmetric form of John's theorem therefore gives contact points $u_j\in\partial((Q')^\circ)\cap S^{r-1}$ and positive numbers $a_j$ such that
\begin{equation}\label{eq:john} \sum_j a_j u_j\otimes u_j=I_r, \qquad \sum_j a_j=r.\end{equation}
The contact points belong to $Q'\cap S^{r-1}$.
Since $Q'=\conv\{\pm z_1,\ldots,\pm z_N\}\subset B_2^r$, the strict convexity of $B_2^r$ shows that every such contact point is one of the signed generators $\pm z_i$.

The classical Dvoretzky--Rogers selection is now performed.
Select $r$ contact points successively.
Suppose $u_{j_1},\ldots,u_{j_{\ell-1}}$ have been chosen and let $F$ be the orthogonal complement of their span.
Taking the trace of $P_F$ in~\eqref{eq:john} gives
$$ r-\ell+1=\sum_j a_j|P_Fu_j|^2. $$
Since $\sum_j a_j=r$, some contact point satisfies $|P_Fu_j|^2\gr(r-\ell+1)/r$.
This projection is nonzero, so the chosen point is not in the span of the preceding ones.
The Gram--Schmidt formula therefore gives distinct signed generators $z_{i_1},\ldots,z_{i_r}$ such that
$$ |\det(z_{i_1},\ldots,z_{i_r})| \gr\sqrt{\frac{r!}{r^r}}\gr e^{-r/2}, $$
where the last inequality follows from $r!\gr(r/e)^r$.
Consequently,
$$ \Delta^r\gr|\det(x_{i_1},\ldots,x_{i_r})| =\det(T)|\det(z_{i_1},\ldots,z_{i_r})| \gr e^{-r/2}\det T, $$
which proves~\eqref{eq:loewner}.
\end{proof}

The sampling step in the next lemma is Maurey's empirical method; the same Hilbert space approximation as in Friedland~\cite[Lemma~4.2]{Friedland26} is used, where reference is made to Pisier's exposition of Maurey's argument~\cite{Pisier81}.
The $\ell_1$ support reduction and the Gaussian density and cross-polytope volume estimates are the elementary tools recorded in~\cite[Lemma~A.7 and Corollary~A.8]{Friedland26} and~\cite[Lemmas~A.9--A.10]{Friedland26}.
The short arguments are reproduced because the affine normalization by the L\"owner ellipsoid is the point used here.

\begin{lemma}\label{lem:affine}
Let $Q=\conv\{\pm x_1,\ldots,\pm x_N\}\subset\R^r$ be full dimensional.
For every $1\ls t\ls r$ and every $\rho>0$,
\begin{equation}\label{eq:affine} \gamma_r(\rho Q) \ls\binom Nt\binom{t+r}{r} \left(\frac{C\rho R_L(Q)}{\sqrt{rt}}\right)^r.\end{equation}
\end{lemma}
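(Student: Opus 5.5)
The plan is to normalize by the L\"owner ellipsoid, approximate every point of $Q$ by a sparse empirical average (Maurey), and cover $\rho Q$ by few small cross-polytopes whose Gaussian measure is bounded by volume.

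First, the affine normalization. Let $\mathcal E_L(Q)=TB_2^r$ and $Q'=T^{-1}Q\subset B_2^r$, with generators $z_i=T^{-1}x_i$ satisfying $|z_i|\ls1$.

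Second, Maurey's empirical method in the Hilbert space $\R^r$. Take $y\in Q'$ and write $y=\sum_i\lambda_i\varepsilon_iz_i$ with $\sum\lambda_i\ls1$. Sample $t$ independent signed generators according to the weights $\lambda_i$; the average $w$ satisfies $\mathbb E|y-w|^2\ls 1/t$. Hence every $y\in Q'$ lies within distance $1/\sqrt t$ of $\conv$-type points of the form $\frac1t\sum_{k=1}^t \pm z_{i_k}$. Instead of using these exact averages, I will use the cruder but cleaner support reduction. Every such $w$ lies in $Q'_J:=\conv\{\pm z_i:i\in J\}$ for some $|J|\ls t$, giving at most $\binom Nt$ sets $J$. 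Therefore
$$\rho Q\subset\bigcup_{|J|\ls t}\rho\bigl(Q_J+t^{-1/2}TB_2^r\bigr),$$
where $Q_J=TQ'_J$.

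Third, estimate the Gaussian measure of each piece. I bound the density of $\gamma_r$ by $(2\pi)^{-r/2}$, so each piece contributes at most $(2\pi)^{-r/2}\vol_r\bigl(\rho(Q_J+t^{-1/2}TB_2^r)\bigr)$. Writing the body as $T\bigl(Q'_J+t^{-1/2}B_2^r\bigr)$, its volume is $\det T\cdot\vol_r(Q'_J+t^{-1/2}B_2^r)$. Since $Q'_J$ is the image of $B_1^{J}$ under a map of operator norm at most $\sqrt t$ in $\ell_2$, expand this volume by Steiner's formula or an elementary count into mixed volumes. The $\binom{t+r}{r}$ factor should arise here, from counting lattice-like combinations or the dimension of the span of $Q'_J$ ($\ls t$). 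The target is
$$\vol_r(Q'_J+t^{-1/2}B_2^r)\ls\binom{t+r}{r}\Bigl(\frac{C}{\sqrt t}\Bigr)^r\vol_r(B_2^r),\qquad \vol_r(B_2^r)\ls (C/\sqrt r)^r.$$
To make this clean, I will instead discretize. Cover $Q'_J$ by the points $\frac1t\sum_k\pm z_{i_k}$ with $i_k\in J$, i.e. $\frac1t\sum_{i\in J}n_iz_i$ with $\sum|n_i|\ls t$. Up to signs, the number of such points is at most $\binom{t+r}{r}$-type, after first reducing $J$ to the case where the $z_i$ live in at most $r$ dimensions or by a Carath\'eodory reduction to at most $r$ generators. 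Then Maurey places every point of $Q'$ within $t^{-1/2}$ of one of these points. Each ball $\rho T(w+t^{-1/2}B_2^r)$ has Gaussian measure at most $(2\pi)^{-r/2}\rho^r\det T\,(C/\sqrt{rt})^r$. Summing over $J$ and over the lattice points gives~\eqref{eq:affine}, with $R_L(Q)^r=\det T$.

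The main obstacle is the combinatorial count producing exactly $\binom Nt\binom{t+r}{r}$. I expect to handle it by choosing a multiset of $t$ indices from $N$ and signs, then using a Carath\'eodory reduction in $\R^r$ so that the relevant nonnegative integer vectors summing to at most $t$ live on at most $r$ coordinates. The precise bookkeeping may require absorbing $2^r$ sign factors and $r^r/r!$-type factors into $C^r$, which the stated form allows.
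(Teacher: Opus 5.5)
Your proposal, in its final discretized form, is correct, but it takes a different route from the paper.

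\textbf{The paper's route.} The paper keeps the convex pieces $Q'_S+t^{-1/2}B_2^r$ and bounds their volume directly. Using $B_2^r\subset\sqrt r\,B_1^r$, it places each piece inside $2M(B_1^{t+r})$ with $M=[Z_S\ \sqrt{r/t}\,I_r]$. A Carath\'eodory-type support reduction then covers $M(B_1^{t+r})$ by $\binom{t+r}{r}$ cross-polytopes on $r$ columns, whose volumes are controlled by Hadamard's inequality. This is exactly where the factor $\binom{t+r}{r}$ comes from.

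\textbf{Your route.} You use Maurey's lemma as a covering-number statement. The empirical averages $\frac1t\sum_i n_iz_i$ with $n\in\mathbb Z^N$ and $\|n\|_1\ls t$ themselves form a $t^{-1/2}$-net of $Q'$. Hence $\rho Q$ is covered by translates of $\rho T(t^{-1/2}B_2^r)$, each of Gaussian measure at most
$$(2\pi)^{-r/2}\rho^r\det T\,t^{-r/2}\vol_r(B_2^r)\ls\left(\frac{C\rho R_L(Q)}{\sqrt{rt}}\right)^r.$$

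\textbf{The count is not an obstacle.} The count you flag as the main obstacle is immediate. Pad $J$ to size exactly $t$, which is possible since $N\gr r\gr t$. The integer vectors supported in $J$ with $\ell_1$ norm at most $t$ then number at most $2^t\binom{2t}{t}\ls 2^r\binom{t+r}{r}$. So no Carath\'eodory reduction is needed. Applying one would in fact destroy the integrality of the coefficients, which is what makes the net finite. Likewise, the Steiner/mixed-volume detour is never completed and can simply be deleted.

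\textbf{Sampling step.} When $\sum_i\lambda_i<1$ you must allow the sample value $0$, as the paper does, or rewrite $y$ as an exact convex combination of the points $\pm z_i$. Otherwise the sample mean is not $y$. Your condition $\sum|n_i|\ls t$ already accommodates this.

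\textbf{What each route buys.} Your route is more elementary: it needs no $\ell_1$ embedding of the ball, no support reduction, and no Hadamard inequality. Since $\binom{2t}{t}\ls 4^t$, it also shows the factor $\binom{t+r}{r}$ is superfluous: one gets $\gamma_r(\rho Q)\ls\binom Nt\bigl(C\rho R_L(Q)/\sqrt{rt}\bigr)^r$. In the application in Proposition~\ref{prop:half} this makes no difference, because there $\binom{t+r}{r}\ls 4^r$ is absorbed into $C^r$ anyway.
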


\begin{proof}
Let $\mathcal E_L(Q)=TB_2^r$, put $z_i=T^{-1}x_i$ and set $Q'=T^{-1}Q$.
Then $Q'\subset B_2^r$.
First, the approximation used below is proved.
Since $Q'=\conv\{\pm z_1,\ldots,\pm z_N\}$, for every $y\in Q'$ one may write $y=\sum_i\lambda_i\varepsilon_i z_i$, where $\lambda_i\gr0$, $\sum_i\lambda_i\ls1$ and $\varepsilon_i\in\{-1,1\}$.
Let $Z$ equal $\varepsilon_i z_i$ with probability $\lambda_i$ and $0$ with the remaining probability.
Then $\mathbb EZ=y$ and, since $z_i\in B_2^r$, $\mathbb E|Z|^2\ls\sum_i\lambda_i\ls1$.
If $Z_1,\ldots,Z_t$ are independent copies of $Z$, then
$$ \mathbb E\left|y-\frac1t\sum_{j=1}^tZ_j\right|^2=\operatorname{Var}\left(\frac1t\sum_{j=1}^tZ_j\right)=\frac1t\operatorname{Var}(Z)=\frac1t\bigl(\mathbb E|Z|^2-|y|^2\bigr)\ls\frac1t. $$
Hence some realization has distance at most $t^{-1/2}$ from $y$.
Its average belongs to $Q'_S:=\conv\{\pm z_i:i\in S\}$ for a set $S\subset[N]$ with $|S|\ls t$.
Since $Q$ is full dimensional, $N\gr r\gr t$, so padding $S$ if necessary gives
\begin{equation}\label{eq:Maureycover} Q'\subset\bigcup_{|S|=t}\left(Q'_S+t^{-1/2}B_2^r\right).\end{equation}

Fix $S$ and let $Z_S:\R^t\to\R^r$ have columns $(z_i)_{i\in S}$.
If $x\in Q'_S$, then $x=Z_Sa$ for some $\|a\|_1\ls1$.
If $y\in t^{-1/2}B_2^r$, then $B_2^r\subset\sqrt r B_1^r$ gives $y=\sqrt{r/t}\,b$ for some $\|b\|_1\ls1$.
Hence $x+y=M(a,b)$, where $M=[Z_S\ \ \sqrt{r/t}\,I_r]$, and $\|(a,b)\|_1\ls2$.
Therefore $Q'_S+t^{-1/2}B_2^r\subset2M(B_1^{t+r})$.

Every point of $M(B_1^{t+r})$ has a representation supported on at most $r$ signed columns of $M$.
Indeed, write $x=\sum_{j=1}^{t+r}\lambda_j w_j$, where $\lambda_j\gr0$, $\sum_j\lambda_j\ls1$, and each $w_j$ is a signed column of $M$.
If more than $r$ coefficients are nonzero, the corresponding vectors are linearly dependent, so $\sum_j\theta_jw_j=0$ for a nonzero vector $\theta$ supported on them.
Replacing $\theta$ by $-\theta$ if necessary, assume $\sum_j\theta_j\gr0$.
Then some $\theta_j>0$, and for $\tau=\min_{\theta_j>0}\lambda_j/\theta_j$ all coefficients $\lambda_j-\tau\theta_j$ are nonnegative, at least one vanishes, the represented vector is unchanged, and the total mass does not increase.
Iterating gives a representation using at most $r$ signed columns.
Padding the support if necessary, therefore
$$ M(B_1^{t+r})\subset \bigcup_{\substack{J\subset[t+r]\\|J|=r}} \conv\{\pm Me_j:j\in J\}. $$
Since $z_i\in B_2^r$ and $t\ls r$, every column of $M$ has Euclidean norm at most $\sqrt{r/t}$.
For $r$ vectors $w_1,\ldots,w_r$,
$$ \vol_r(\conv\{\pm w_1,\ldots,\pm w_r\})=\frac{2^r}{r!}|\det(w_1,\ldots,w_r)|. $$
The outer factor $2$ in the preceding inclusion multiplies volume by $2^r$.
Hence, using subadditivity of volume and Hadamard's inequality,
$$ \vol_r\left(Q'_S+t^{-1/2}B_2^r\right) \ls\binom{t+r}{r}\frac{4^r}{r!}\left(\frac{r}{t}\right)^{r/2} \ls\binom{t+r}{r}\left(\frac{4e}{\sqrt{rt}}\right)^r, $$
where the last inequality follows from $r!\gr(r/e)^r$.

For every measurable $A\subset\R^r$, the Gaussian density bound gives $\gamma_r(\rho TA)\ls(2\pi)^{-r/2}\rho^r(\det T)\vol_r(A)$.
Since $\det T=R_L(Q)^r$, the preceding volume estimate yields, for every $S$,
$$ \gamma_r\left(\rho T\left(Q'_S+t^{-1/2}B_2^r\right)\right) \ls\binom{t+r}{r} \left(\frac{C\rho R_L(Q)}{\sqrt{rt}}\right)^r. $$
Since the affine map is applied only after Gaussian measure has been bounded by density times volume, undoing the L\"owner normalization costs exactly $\det T$; no condition number enters.
Summing over the $\binom Nt$ sets in~\eqref{eq:Maureycover} proves~\eqref{eq:affine}.
This completes the proof.
\end{proof}

The next elementary suppression lemma is a variant of Friedland's random suppression lemma~\cite[Lemma~3.2]{Friedland26}.
The same suppression idea already appears in Tikhomirov's proof of Lemma~5.4~\cite{Tikhomirov19}, while Friedland isolates it as an averaging statement and combines it with block-tail Gram--Schmidt estimates in the small $U$ regime.
Here the lemma is used differently: no linear independence is required, the second family may have arbitrary cardinality and after suppression we quotient out the unsuppressed $K$ vectors and use Proposition~\ref{prop:wedge} followed by L\"owner normalization and Maurey's method.

\begin{lemma}\label{lem:suppression}
Let $ Q=\conv\bigl(\{\pm y_i:1\ls i\ls n\}\cup\{\pm z_j:1\ls j\ls N\}\bigr)\subset\R^n. $
Let $1\ls r\ls n$, put $\alpha=r/n$, and for $J\subset[n]$ with $|J|=r$ define $ Q_J=\conv\bigl(\{\pm y_i:i\notin J\}\cup\{\pm\alpha y_i:i\in J\}\cup\{\pm z_j:1\ls j\ls N\}\bigr). $
Then, for every $t>0$,
\begin{equation}\label{eq:suppression} \gamma_n(tQ)\ls\frac2{\binom nr}\sum_{|J|=r}\gamma_n(3tQ_J).\end{equation}
\end{lemma}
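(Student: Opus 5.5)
The plan is to average over a random choice of $J$ and show that a large fraction of the points of $tQ$ lie in $3tQ_J$ for a constant fraction of the sets $J$. Then I integrate against $\gamma_n$.

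Fix $x\in tQ$ and write
$$x=t\Bigl(\sum_{i=1}^n\lambda_i\varepsilon_iy_i+\sum_j\mu_j\eta_jz_j\Bigr),\qquad \sum\lambda_i+\sum\mu_j\ls1,\quad \lambda_i,\mu_j\gr0.$$
For a set $J$ with $|J|=r$, the terms with $i\notin J$ and the $z_j$ terms are kept as they are. For $i\in J$ I use $y_i=\alpha^{-1}(\alpha y_i)$. The representation then has total mass
$$\sum_{i\notin J}\lambda_i+\alpha^{-1}\sum_{i\in J}\lambda_i+\sum_j\mu_j\ls 1+\alpha^{-1}\sum_{i\in J}\lambda_i .$$
So $x\in(1+\alpha^{-1}\lambda(J))\,tQ_J$, where $\lambda(J):=\sum_{i\in J}\lambda_i$. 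Here I use that $Q_J$ is convex, symmetric and contains $0$.

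Next I pick $J$ uniformly among $r$-subsets. Each index lies in $J$ with probability $r/n=\alpha$, so $\mathbb E\lambda(J)=\alpha\sum_i\lambda_i\ls\alpha$. Markov's inequality gives $\Pp\{\lambda(J)>2\alpha\}\ls1/2$. Hence for at least half of the sets $J$ we have $\lambda(J)\ls2\alpha$, and for those $x\in3tQ_J$. The weights $\lambda$ depend on $x$, but this does not matter: the conclusion is only that $\#\{J:x\in3tQ_J\}\gr\frac12\binom nr$ for every $x\in tQ$.

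Then I write
$$\mathbf 1_{tQ}(x)\ls\frac2{\binom nr}\sum_{|J|=r}\mathbf 1_{3tQ_J}(x)$$
and integrate this against $\gamma_n$. That gives \eqref{eq:suppression}.

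I do not expect a real obstacle here. The only points needing care are that the coefficient sum may be strictly less than one, and that a dilation factor below $3$ can be absorbed because $Q_J$ is star-shaped about the origin.
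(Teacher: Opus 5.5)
Your proposal is correct and follows the paper's proof essentially step for step. You fix a representation of $x$, use averaging over $J$ (Markov) to obtain at least half of the sets $J$ with $\sum_{i\in J}\lambda_i\ls2\alpha$, bound the $Q_J$-gauge by $3t$, and integrate the indicator inequality against $\gamma_n$. The only difference is cosmetic: you bound the mass by $1+\alpha^{-1}\lambda(J)$ instead of the paper's slightly sharper $t+(n/r-1)S_J$, and this still gives the factor $3$.
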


\begin{proof}
Fix $x\in tQ$ and choose a representation
$$ x=\sum_{i=1}^na_iy_i+\sum_{j=1}^Nb_jz_j,\qquad \sum_i|a_i|+\sum_j|b_j|\ls t. $$
For $J\subset[n]$, $|J|=r$, put $S_J=\sum_{i\in J}|a_i|$.
Averaging over all such $J$ gives
$$ \frac1{\binom nr}\sum_{|J|=r}S_J=\frac rn\sum_i|a_i|\ls\frac{rt}{n}. $$
Hence at least half of the sets $J$ satisfy $S_J\ls2rt/n$.
For each such $J$, the gauge of $x$ with respect to $Q_J$ is at most
$$ \sum_{i\notin J}|a_i|+\frac nr\sum_{i\in J}|a_i|+\sum_j|b_j|\ls t+\left(\frac nr-1\right)\frac{2rt}{n}\ls3t. $$
Thus $x\in3tQ_J$ for at least half of all $J$ and consequently
$$ \mathbf 1_{tQ}(x)\ls\frac2{\binom nr}\sum_{|J|=r}\mathbf 1_{3tQ_J}(x). $$
Integrating against $\gamma_n$ proves~\eqref{eq:suppression}.
\end{proof}

For $A\in\mathcal A_\eps$ put $ Q_A:=\conv\{\pm v_1,\ldots,\pm v_n,\pm u_1,\ldots,\pm u_n\}. $
Since $\Gamma A^{(i)}=v_i+u_i\in2Q_A$ for every $i$,
\begin{equation}\label{eq:KAQA} \mathcal K_A(\rho;\Gamma_A)\subset4\rho Q_A.\end{equation}
For $J\subset[n]$, $|J|=r$, put $\alpha=r/n$ and define $ Q_{A,J}:=\conv\bigl(\{\pm v_i:i\notin J\}\cup\{\pm\alpha v_i:i\in J\}\cup\{\pm u_j:1\ls j\ls n\}\bigr). $
Lemma~\ref{lem:suppression} and~\eqref{eq:KAQA} give
\begin{equation}\label{eq:KAsuppression} \gamma_n\bigl(\mathcal K_A(\rho;\Gamma_A)\bigr)\ls\frac2{\binom nr}\sum_{|J|=r}\gamma_n(12\rho Q_{A,J}).\end{equation}

\begin{proposition}\label{prop:half}
There are absolute constants $C_0,c_0,c>0$ with the following property.
Suppose
\begin{equation}\label{eq:mainconditions} \Lambda\gr2,\qquad r\gr2\Lambda,\qquad r^2\gr C_0sn\Lambda,\qquad \rho\delta\sqrt{\frac{\Lambda}{r}}\ls c_0,\end{equation}
where
\begin{equation}\label{eq:delta} \delta:=\max\left\{\frac rn,\frac1{\sqrt s}\right\}.\end{equation}
Then there is an event $\mathcal E$ with $\Pp(\mathcal E^c)\ls e^{-cr^2}$ such that, on $\mathcal E$, simultaneously for every $A\in\mathcal A_\eps$,
\begin{equation}\label{eq:half} \gamma_n\bigl(\mathcal K_A(\rho;\Gamma_A)\bigr)\ls\frac12.\end{equation}
\end{proposition}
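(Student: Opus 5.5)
The event $\mathcal E$ will be the event of Proposition~\ref{prop:wedge}, applied with the given $r$ and $s$ and with a deterministic choice of $\widehat E_B$ for every $B\in\mathcal T_{n-r}$. On $\mathcal E$ I fix $A\in\mathcal A_\eps$ and start from~\eqref{eq:KAsuppression}. It then suffices to show that, for every $J\subset[n]$ with $|J|=r$,
$$\gamma_n(12\rho Q_{A,J})\ls\tfrac14 .$$

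Fix $J$. Let $B$ be the $m\times(n-r)$ matrix whose columns are $a_i^{(K)}$, $i\notin J$. Each such column lies in $\mathcal D_s$, so $B\in\mathcal T_{n-r}$. Put $H=H_B$; on $\mathcal E$ we have $\dim H=r$. The vectors $v_i=\Gamma a_i^{(K)}$ with $i\notin J$ lie in $\Gamma\widehat E_B=H^\perp$, so
$$P_HQ_{A,J}=\conv\bigl(\{\pm\alpha P_Hv_i:i\in J\}\cup\{\pm P_Hu_j\}\bigr)=:Q^H.$$
Since $Q_{A,J}\subset P_H^{-1}(Q^H)$ and the Gaussian marginal is standard, we get
$$\gamma_n(12\rho Q_{A,J})\ls\gamma_H(12\rho Q^H).$$
Now apply~\eqref{eq:wedge} with $x_i=a_i^{(K)}$ and $z_j=a_j^{(U)}\in\mathcal C_s$. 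Any $r$ of the generators of $Q^H$ consist of $p$ suppressed $K$ vectors and $r-p$ $U$ vectors, so their determinant is at most
$$\alpha^p(C_1\sqrt r)^r s^{-(r-p)/2}\ls(C_1\sqrt r\,\delta)^r .$$
Repeated generators give determinant zero. If $Q^H$ is not full dimensional, its Gaussian measure is $0$. Otherwise Lemma~\ref{lem:loewner} gives
$$R_L(Q^H)\ls\sqrt e\,C_1\sqrt r\,\delta .$$

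Next apply Lemma~\ref{lem:affine} with $N\ls r+n\ls2n$ and some $t\ls r$, giving
$$\gamma_H(12\rho Q^H)\ls\binom{2n}{t}\binom{t+r}{r}\left(\frac{C\rho\delta}{\sqrt t}\right)^r .$$
The natural choice is $t\approx r/\Lambda$ (so $t\gr1$, using $r\gr2\Lambda$). Then $\binom{2n}{t}\ls e^{Ct\ln n}\ls e^{Cr}$ because $\ln n\ls\Lambda$, and $\binom{t+r}{r}\ls 4^r$. Also
$$\frac{C\rho\delta}{\sqrt t}\approx C\rho\delta\sqrt{\Lambda/r}\ls Cc_0 .$$
The whole bound is therefore at most $(C'c_0)^r\ls\tfrac14$ once $c_0$ is small. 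This holds for every $J$ and every $A$ on $\mathcal E$, and summing in~\eqref{eq:KAsuppression} gives~\eqref{eq:half}. The exceptional probability is that of Proposition~\ref{prop:wedge}, namely $e^{-cr^2}$.

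I expect the main point to watch to be the entropy term $\binom{N}{t}$ in Lemma~\ref{lem:affine}. The choice $t\approx r/\Lambda$ (rounded to an integer in $[1,r]$) must make this term cost only $e^{Cr}$ while keeping the factor $\sqrt{\Lambda/r}$ that matches the last condition in~\eqref{eq:mainconditions}. Besides this, two technical points need care: checking that $B$ genuinely lies in $\mathcal T_{n-r}$, and the degenerate case where some generators coincide or vanish.
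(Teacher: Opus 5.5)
Your proposal is correct and follows essentially the same route as the paper's proof. It uses the event of Proposition~\ref{prop:wedge}, quotients by $\Gamma\widehat E_{B}$ for the unsuppressed $K$ parts, bounds the determinants by multilinearity, and then applies Lemma~\ref{lem:loewner} and Lemma~\ref{lem:affine} with $t=\lceil r/\Lambda\rceil$. The differences are cosmetic: you target $1/4$ per $J$ where the paper targets $1/8$, which gives exactly $1/2$ after \eqref{eq:KAsuppression}, and you should state explicitly that $C_0$ is taken at least as large as the constant in Proposition~\ref{prop:wedge}.
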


\begin{proof}
Choose $C_0$ at least as large as the constant in Proposition~\ref{prop:wedge} and let $\mathcal E$ be the event of that proposition.
Its complement has probability at most $e^{-cr^2}$ by the third condition in~\eqref{eq:mainconditions}.
Fix $A\in\mathcal A_\eps$ and $J\subset[n]$ with $|J|=r$.
Let $B_J$ be the matrix whose columns are $a_i^{(K)}$, $i\notin J$.
Then $B_J\in\mathcal T_{n-r}$.
Let $\widehat E_{B_J}$ be the deterministic completion used in Proposition~\ref{prop:wedge} and put $H_J=(\Gamma\widehat E_{B_J})^\perp$.
For $i\notin J$, one has $a_i^{(K)}\in\operatorname{range}B_J\subset\widehat E_{B_J}$ and therefore $P_{H_J}v_i=0$.
Thus
$$ P_{H_J}Q_{A,J}=\conv\bigl(\{\pm\alpha P_{H_J}v_i:i\in J\}\cup\{\pm P_{H_J}u_j:1\ls j\ls n\}\bigr)=:\widetilde Q_{A,J}. $$
The space $H_J$ has dimension $r$ on $\mathcal E$.
Identify $H_J$ isometrically with $\R^r$.
If $\widetilde Q_{A,J}$ is not full dimensional in $H_J$ then $\gamma_{H_J}(12\rho\widetilde Q_{A,J})=0$.
Assume that it is full dimensional.

Take any $r$ generators $w_1,\ldots,w_r$ of $\widetilde Q_{A,J}$ and suppose that $p$ of them come from the suppressed $K$ vectors and $r-p$ from the $U$ vectors.
Recall from~\eqref{eq:delta} that $r/n\ls\delta$ and $s^{-1/2}\ls\delta$.
By multilinearity and Proposition~\ref{prop:wedge},
$$ \|w_1\wedge\cdots\wedge w_r\|\ls\left(\frac rn\right)^p(C\sqrt r)^rs^{-(r-p)/2}\ls(C\sqrt r\,\delta)^r. $$
Lemma~\ref{lem:loewner} therefore gives
\begin{equation}\label{eq:RL} R_L(\widetilde Q_{A,J})\ls C\sqrt r\,\delta.\end{equation}

Set $t=\lceil r/\Lambda\rceil$.
Since $r\gr2\Lambda$ and $\Lambda\gr2$ one has $  r/\Lambda\ls t\ls {2r}/{\Lambda}\ls r. $
The polytope $\widetilde Q_{A,J}$ has at most $n+r\ls2n$ generators.
Lemma~\ref{lem:affine} and~\eqref{eq:RL} give
$$ \gamma_{H_J}(12\rho\widetilde Q_{A,J})\ls\binom{n+r}{t}\binom{t+r}{r}\left(\frac{C\rho\delta}{\sqrt t}\right)^r. $$
Since $r\ls n$ and $t\ls2r/\Lambda$, using $\Lambda\gr\ln n$ we have
$$ \ln\binom{n+r}{t}\ls t\ln\frac{e(n+r)}t\ls\frac{2r}{\Lambda}\ln(2en)\ls Cr. $$
Also, since $t\ls r$, $ \binom{t+r}{r}\ls2^{t+r}\ls4^r. $ 
Using $t\gr r/\Lambda$,
\begin{equation}\label{eq:projectedmeasure} \gamma_{H_J}(12\rho\widetilde Q_{A,J})\ls\left(C\rho\delta\sqrt{\frac{\Lambda}{r}}\right)^r.\end{equation}
Choose $c_0$ sufficiently small in~\eqref{eq:mainconditions} so that the right hand side is at most $1/8$.

At this point $\Gamma$ is fixed and hence so is $H_J$.
If $G$ is an auxiliary standard Gaussian vector in $\R^n$ then $P_{H_J}G$ is standard Gaussian in $H_J$ and $G\in D$ implies $P_{H_J}G\in P_{H_J}D$.
Therefore
$ \gamma_n(12\rho Q_{A,J})\ls\gamma_{H_J}(12\rho\widetilde Q_{A,J})\ls 1/8. $
Returning to~\eqref{eq:KAsuppression} gives $ \gamma_n\bigl(\mathcal K_A(\rho;\Gamma_A)\bigr)\ls 1/4. $
In particular~\eqref{eq:half} holds.
Since Proposition~\ref{prop:wedge} holds simultaneously for every $B\in\mathcal T_{n-r}$ and all admissible vectors in $\mathcal D_s$ and $\mathcal C_s$, the same event $\mathcal E$ gives the conclusion for every $A\in\mathcal A_\eps$.
\end{proof}

\section{Proof of the main theorem}

\begin{proof}[Proof of Theorem~\ref{thm:main}]
Put $L=\ln n$ and choose
\begin{equation}\label{eq:parameters} \rho=c_1n^{5/8}L^{-5/8},\qquad s=\left\lfloor n^{1/2}L^{-1/2}\right\rfloor,\qquad r=\left\lceil C_2n^{3/4}L^{1/4}\right\rceil,\end{equation}
where $C_2$ will first be chosen sufficiently large and $c_1$ then sufficiently small.
Let $\Lambda$ be as in~\eqref{eq:lambda}.
For all sufficiently large $n$, $1\ls\rho\ls n$ and hence $ L\ls\Lambda\ls2L. $
In particular $\Lambda\gr2$.
Also $1\ls s\ls n$, $1\ls r\ls n$ and $r\gr2\Lambda$.

It remains to verify the remaining two conditions in~\eqref{eq:mainconditions}.
For large $n$, $ sn\Lambda\ls2n^{3/2}L^{1/2}$ and $ r^2\gr C_2^2n^{3/2}L^{1/2}. $
Thus, after choosing $C_2$ sufficiently large, $ r^2\gr C_0sn\Lambda. $
Moreover the quantity inside the floor defining $s$ tends to infinity so $ s\gr (1/2)n^{1/2}L^{-1/2}. $
Consequently $ r/n\ls CC_2n^{-1/4}L^{1/4}$ and $ 1/{\sqrt s}\ls Cn^{-1/4}L^{1/4}$ and hence $ \delta\ls CC_2n^{-1/4}L^{1/4}. $
Also $ \sqrt{{\Lambda}/{r}}\ls CC_2^{-1/2}n^{-3/8}L^{3/8}. $
Therefore
$$ \rho\delta\sqrt{\frac{\Lambda}{r}}\ls\left(c_1n^{5/8}L^{-5/8}\right)\left(CC_2n^{-1/4}L^{1/4}\right)\left(CC_2^{-1/2}n^{-3/8}L^{3/8}\right)\ls Cc_1C_2^{1/2}. $$
After $C_2$ has been fixed, choose $c_1>0$ sufficiently small.
Then all the conditions in~\eqref{eq:mainconditions} hold for all sufficiently large $n$.

Let $\mathcal E$ be the event of Proposition~\ref{prop:half}.
Then
\begin{equation}\label{eq:goodprob} \Pp(\mathcal E^c)\ls e^{-cr^2},\end{equation}
and on $\mathcal E$ simultaneously for every $A\in\mathcal A_\eps$ $ \gamma_n\bigl(\mathcal K_A(\rho;\Gamma_A)\bigr)\ls 1/2. $
For fixed $A\in\mathcal A_\eps$ define
$$ \mathcal H_A:=\left\{\gamma_n\bigl(\mathcal K_A(\rho;\Gamma_A)\bigr)\ls\frac12\right\}. $$
The event $\mathcal H_A$ depends only on the exposed Gaussian columns and is therefore $\mathcal F_A$ measurable; the global event $\mathcal E$ need not be.
This is the same measurability step used in Friedland's final powering argument~\cite[proof of Theorem~1.2]{Friedland26}.
Proposition~\ref{prop:half} gives $\mathcal E\subset\mathcal H_A$.
Hence~\eqref{eq:powering} implies
\begin{align*}
\Pp\bigl(&G_m\subset2\rho\Gamma A(B_1^n),\ \mathcal E\bigr) \ls \Pp\bigl(G_m\subset2\rho\Gamma A(B_1^n),\ \mathcal H_A\bigr)\\
&=\mathbb E\left[\mathbf1_{\mathcal H_A}\Pp\bigl(G_m\subset2\rho\Gamma A(B_1^n)\mid\mathcal F_A\bigr)\right] \ls\mathbb E\left[\mathbf1_{\mathcal H_A}\gamma_n\bigl(\mathcal K_A(\rho;\Gamma_A)\bigr)^{N(A)}\right] \ls2^{-N(A)}\ls2^{-(n^3-n^2)}.
\end{align*}
Using~\eqref{eq:Anet} and summing over $A\in\mathcal A_\eps$ gives $ \Pp\left(\exists A\in\mathcal A_\eps:G_m\subset2\rho\Gamma A(B_1^n),\ \mathcal E\right)\ls\exp(Cn^2\Lambda)2^{-(n^3-n^2)}\ls \exp{(-cn^3)}, $
where the last inequality follows from $\Lambda\ls2L$ and $n^2L=o(n^3)$.
By the discretization,
$$ \{\BM(G_m,B_1^n)\ls\rho\}\subset\mathcal E^c\cup\left(\mathcal E\cap\bigcup_{A\in\mathcal A_\eps}\{G_m\subset2\rho\Gamma A(B_1^n)\}\right). $$
Combining this inclusion with~\eqref{eq:goodprob} gives $ \Pp\{\BM(G_m,B_1^n)\ls\rho\}\ls \exp{(-cr^2)}+\exp{(-c'n^3)}\ls 2/n $
for all sufficiently large $n$.
Thus, with $c=c_1$, $ \Pp\left\{\BM(G_m,B_1^n)\gr c n^{5/8}(\ln n)^{-5/8}\right\}\gr1-2/n. $
Finally, $G_m$ is full dimensional almost surely so the event above has positive probability and therefore
$$ R_\infty(n)\gr c n^{5/8}(\ln n)^{-5/8}. $$
\end{proof}

The choice of parameters above is optimal, at polynomial precision, under the two conditions used in the proof.
Indeed write $s=n^a$, $r=n^b$ and $\rho=n^c$, and ignore logarithmic factors.
The entropy condition $r^2\gr C_0sn\Lambda$ from Proposition~\ref{prop:wedge} which ensures that the $e^{-cr^2}$ determinant tail absorbs the cardinalities of the coefficient classes gives $2b\gr1+a$ while the last condition in~\eqref{eq:mainconditions} gives
$$ c\ls\min\left\{1-\frac b2,\frac{a+b}{2}\right\}. $$
The largest possible value is attained at $a=1/2$ and $b=3/4$, and is $c=5/8$.
Thus a larger polynomial exponent requires an improvement of one of the structural estimates in the argument rather than a different choice of the parameters $r$ and $s$.

\bigskip

\noindent {\bf Acknowledgement.} 
The author acknowledges support by a PhD scholarship from the National Technical University of Athens.
The author is grateful to Apostolos Giannopoulos for useful discussions.
The author thanks Omer Friedland for sharing his independent manuscript before public posting and for a collegial exchange about the relation between the two arguments.

\bigskip 


\footnotesize
\bibliographystyle{amsplain}

\bigskip

\thanks{\noindent {\bf Keywords:} Banach--Mazur distance; Gaussian polytopes; Cross-polytope; L\"owner ellipsoid; Exterior products.}

\thanks{\noindent {\bf 2020 MSC:} Primary 52A23; Secondary 46B20, 60D05.}

\bigskip

\bigskip 

\medskip 

\noindent \textsc{Antonios \ Hmadi}: School of Applied Mathematical and Physical Sciences, National Technical University of Athens, Department of Mathematics, Zografou Campus, GR-157 80, Athens, Greece.

\smallskip

\noindent \textit{E-mail:} \texttt{ahmadi@mail.ntua.gr}

\end{document}